\newcommand{\mA}{{\mathcal{A}}}
\newcommand{\mF}{{\mathcal{F}}}
\newcommand{\mI}{{\mathcal{I}}}
\newcommand{\mL}{{\mathcal{L}}}
\newcommand{\mR}{{\mathcal{R}}}
\newcommand{\mW}{{\mathcal{W}}}
\newcommand{\TT}{\mathbb{T}}
\newcommand{\QQ}{{\mathbb{Q}}}
\newcommand{\OO}{{\mathbb{O}}}
\newcommand{\RR}{\mathbb{R}}
\newcommand{\spec}[1]{\operatorname{Sp}\left(#1\right)}
\newcommand{\PP}{\mathbb{P}}
\newtheorem{theorem}{Theorem}[section]
\newtheorem{lemma}[theorem]{Lemma}
\newtheorem{proposition}[theorem]{Proposition}
\newtheorem{corollary}[theorem]{Corollary}
\newtheorem{example}[theorem]{Example}
\newtheorem{question}[theorem]{Question}
\begin{document}
\title{A characterization of productive cellularity}
\author[R. M. Mezabarba]{Renan M. Mezabarba$^1$}
\thanks{$^1$ Supported by CNPq (2017/09252-3) and Capes (88882.315491/2019-01)}
\address{Centro de Ci\^encias Exatas,
Universidade Federal do Espírito Santo,
Vit\'oria, ES, 29075-910, Brazil}
\email{renan.mezabarba@ufes.br}

\author[L. F. Aurichi]{Leandro F. Aurichi}
\address{Instituto de Ci\^encias Matem\'aticas e de Computa\c c\~ao,
Universidade de S\~ao Paulo,
S\~ao Carlos, SP, 13560-970, Brazil}
\email{aurichi@icmc.usp.br}

\author[L. R. Junqueira]{Lúcia R. Junqueira}
\address{Instituto de Matem\'atica e Estat\'istica,
Universidade de S\~ao Paulo,
S\~ao Paulo, SP, 05508-900, Brazil}
\email{lucia@ime.usp.br}
\keywords{posets, cellularity, cellular productivity, countable chain condition}

\subjclass[2020]{Primary 54B10; Secondary 06A06, 54D65, 54A25.}

\begin{abstract}
We investigate the notion of productive cellularity of arbitrary posets and topological spaces. Particularly, by working with families of antichains ordered with reverse inclusion, we give necessary and sufficient conditions to determine whether a poset or a topological space is productively ccc.
\end{abstract}

\maketitle

\section*{Introduction}

A topological space $X$ satisfies the \emph{countable chain condition} (\emph{ccc} for short) if there is no uncountable family of nonempty pairwise disjoint open sets. Its \emph{cellularity}, denoted $c(X)$, is the natural generalization: the supremum of all cardinals of the form $|A|$, where $A$ stands for a family of nonempty pairwise disjoint open sets of $X$.

Separable spaces are the easiest examples of ccc spaces, with the converse holding for metrizable spaces. In the same way, the \emph{density} of $X$, which is the least cardinality of a dense subset of $X$ and is denoted $d(X)$, is such that $c(X)\leq d(X)$, with the reverse inequality holding for metrizable spaces. Since the product of separable spaces is again separable, one would expect a similar behavior with cellularity, but the situation is quite delicate.

Indeed, in the realm of Martin's Axiom (MA) plus the negation of the Continuum Hypothesis ($\neg$CH), one can prove that every product of ccc spaces is a ccc space\footnote{Folklore, but possibly due to Juhász.}. On the other hand, a Suslin line turns out to be a ccc space whose square is not ccc\footnote{Attributed to Kurepa.}. Since each of the statements ``MA + $\neg$CH''and ``there exists a Suslin line'' are independent of ZFC\footnote{The reader may find the details in Kunen~\cite{Kunen}.}, it follows that productivity of ccc spaces is itself independent of ZFC.

Despite of this odd behavior concerning foundations, there are well known classes of \emph{productively ccc} spaces which turns out to be useful. For instance, arbitrary products of separable spaces are ccc, a result due to Fremlin and discussed again in Section 4. This implies that $\RR^X$ is ccc for every space $X$ and, since $C_p(X)$ is dense in $\RR^X$, it follows that $C_p(X)$ is ccc for every topological space $X$. Todor\v{c}evi\'{c}'s paper~\cite{stevo.chaintopology} discuss other applications of productiveness of the countable chain condition in topology.

However, the previous observations regarding foundations do not settle a related but different problem: which properties must a space $X$ have in order to guarantee that $X\times Y$ is a ccc space whenever $Y$ is a ccc space? Of course, these properties should refer only to $X$ or to spaces directly related to it, in order to avoid trivial answers. This is precisely what we do in this work, but in the slightly more general context of preordered sets (posets for short).

The paper is organized as follows. In the first section we recall some definitions about posets and fix notations. The second section contains our main result, the characterization of the productive cellularity of a poset. In the third section we investigate cardinal invariants related to productive cellularity, while the fourth section deals with the generalization of some classic theorems about ccc topological spaces to the present context. Finally, the last section is dedicated to discussing new perspectives for old (and open) problems concerning \emph{productively ccc posets}. Along the text, $\kappa$ and $\lambda$ denote infinite cardinals.

\section{Posets and their spectra}

We say that $(\PP,\leq)$ is a \emph{poset} if the set $\PP$ is endowed with a \emph{preorder} $\leq$, which is a reflexive and transitive binary relation, and it is called a \emph{partial order} if in addition $\leq$ is antisymmetric. Two elements $p$ and $q$ of a poset $(\PP,\leq)$ are said to be \emph{compatible} if there exists an $r\in\PP$ such that $r\leq p,q$. Naturally, we say that $p$ and $q$ are \emph{incompatible} if they are not compatible, what we abbreviate with $p\,\bot\,q$. Finally, a subset $A\subset \PP$ of pairwise incompatible elements is called an \emph{antichain} of $\PP$.

The \emph{cellularity} of a poset $\PP$, denoted by $c(\PP)$, is the supremum of all infinite cardinals of the form $|A|$ for some antichain $A\subset \PP$. In this way, the cellularity of a topological space $X$ is precisely the cellularity of the poset $\OO_X$, where $\OO_X$ denotes the family of nonempty open sets of $X$ partially ordered by inclusion.

For posets $(\PP,\leq)$ and $(\QQ,\preceq)$, we can consider the set $\PP\times\QQ$ preordered by the relation $\sqsubseteq$, which is defined by the rule 
\[(p,q)\sqsubseteq (p',q')\Leftrightarrow p\leq p'\,\,and\,\, q\preceq q'.\]

Then, it is not hard to see that for topological spaces $X$ and $Y$, the cellularity of $X\times Y$ is the cellularity of the poset $\OO_X\times\OO_Y$ endowed with the product order. However, even more is true. Let us define the \emph{cellular spectrum}\footnote{The word ``spectrum'' is a reference to the ``frequency spectrum'', considered by Arhangel'skii in \cite{arhantight}, a class of cardinals related to the tightness of products of topological spaces.} of a poset $\PP$, denoted $\spec{\PP}$, as the class

\[\spec{\PP}:=\left\{\kappa\geq \aleph_0:\forall \QQ\,(c(\QQ)\leq \kappa\Rightarrow c(\PP\times\QQ)\leq \kappa)\right\}.\]

Notice that a poset $\PP$ is \emph{productively ccc} precisely when $\aleph_0\in\spec{\PP}$. Also, defining the cellular spectrum $\spec{X}$ of a topological space in the same way does not give a new class of cardinals, since they are equal, as expected.

\begin{proposition}\label{spectop=specposet}
For a topological space $X$ one has $\spec{X}=\spec{\mathbb{O}_X}$.
\end{proposition}

\begin{proof}
Suppose $\kappa\in\spec{X}$ and let $\QQ$ be a poset with $c(\QQ)\leq\kappa$. Denote by $Y$ the set $\QQ$ endowed with the topology generated by the sets of the form $\{s\in\QQ:s\leq q\}$, which clearly satisfies $c(Y)=c(\QQ)$. Now, the hypothesis implies $c(X\times Y)\leq \kappa$, while a straightforward calculation yields \[c(X\times Y)=c(\mathbb{O}_X\times\mathbb{O}_Y)=c(\mathbb{O}_X\times\QQ),\]
showing that $\kappa\in\spec{\mathbb{O}_X}$.

The converse is trivial.
\end{proof}

In the PhD thesis of the first author \cite{thesis}, it was given necessary and sufficient conditions for a poset $\PP$ to be productively ccc\footnote{It is worthwhile to mention that the idea is an adaption of the methods applied by Aurichi and Zdomskyy \cite{Aur-Zdom} to characterize productive Lindelöf spaces.}, i.e., for $\aleph_0\in\spec{\PP}$. The considerations in the next section are the natural generalizations of the countable case.

\section{The main theorem}

Our goal in this section is to determine necessary and sufficient conditions in order to decide whether a given cardinal $\kappa\geq \aleph_0$ belongs to $\spec{\PP}$. To this end, we say that a family $\mathscr{A}$ of antichains of $\mathbb{P}$ is a $\kappa$-\emph{large family} if $\left|\bigcup\mathscr{A}\right|\geq \kappa^{+}$, and we denote by $\mL_{\kappa}(\PP)$ the collection of all such families. 
Finally, for a $\kappa$-large family $\mathscr{A}\in\mL_\kappa(\PP)$, we set $\mathscr{F}(\mathscr{A})=\bigcup_{\mA\in\mathscr{A}}[\mA]^{<\aleph_0}$, partially ordered by the reverse inclusion relation.

We shall use posets of the form $\mathscr{F(A)}$ in order to characterize the spectrum of $\PP$. This can be done because incompatibility conditions in $\mathscr{F(A)}$ translate to compatibility conditions of $\PP$, as we show in the following lemma.

\begin{lemma}\label{tech1}
Let $\mathscr{A}\in\mL_\kappa(\PP)$ and let $P,Q\in\mathscr{F(A)}$. Then $P\,\bot\,Q$ in $\mathscr{F(A)}$ if, and only if, $P\cup Q\not\in\mathscr{F(A)}$.
\end{lemma}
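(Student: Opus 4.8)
The plan is to unwind the definition of compatibility in $\mathscr{F}(\mathscr{A})$, whose order is reverse inclusion. Two finite sets $P,Q\in\mathscr{F}(\mathscr{A})$ are compatible precisely when there is some $R\in\mathscr{F}(\mathscr{A})$ with $R\supseteq P$ and $R\supseteq Q$, that is, with $R\supseteq P\cup Q$. So the content of the lemma is the equivalence: such an $R$ exists if and only if $P\cup Q$ itself already belongs to $\mathscr{F}(\mathscr{A})$.

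For the direction corresponding to the contrapositive of ``$P\,\bot\,Q\Rightarrow P\cup Q\notin\mathscr{F}(\mathscr{A})$'', I would simply observe that if $P\cup Q\in\mathscr{F}(\mathscr{A})$, then $P\cup Q$ is a common lower bound of $P$ and $Q$ in $\mathscr{F}(\mathscr{A})$, witnessing their compatibility; nothing more is needed. For the converse, suppose $P$ and $Q$ are compatible and fix $R\in\mathscr{F}(\mathscr{A})$ with $R\supseteq P\cup Q$. By definition of $\mathscr{F}(\mathscr{A})$ there is an antichain $\mA\in\mathscr{A}$ with $R\in[\mA]^{<\aleph_0}$, i.e., $R$ is a finite subset of $\mA$. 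Since $P\cup Q$ is a finite set contained in $R\subseteq\mA$, it follows that $P\cup Q\in[\mA]^{<\aleph_0}\subseteq\mathscr{F}(\mathscr{A})$.

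The only point worth isolating — and the one that makes the argument work — is that each $[\mA]^{<\aleph_0}$ is downward closed under inclusion, so membership of some superset of $P\cup Q$ in a given $[\mA]^{<\aleph_0}$ already forces $P\cup Q$ into that same $[\mA]^{<\aleph_0}$. There is no genuine obstacle here: the lemma is a bookkeeping step whose role is to let incompatibility in $\mathscr{F}(\mathscr{A})$ be detected by the purely $\PP$-side condition that the finite set $P\cup Q$ is not contained in any single member of $\mathscr{A}$, which is the form in which it will be applied later.
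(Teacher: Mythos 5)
Your proposal is correct and follows essentially the same argument as the paper: compatibility of $P$ and $Q$ is witnessed by $P\cup Q$ itself when $P\cup Q\in\mathscr{F}(\mathscr{A})$, and conversely any common lower bound $R$ lies in some $[\mA]^{<\aleph_0}$, forcing $P\cup Q\subseteq R\subseteq\mA$ and hence $P\cup Q\in\mathscr{F}(\mathscr{A})$. No differences worth noting.
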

\begin{proof}
Note that if $P\cup Q\in\mathscr{F(A)}$, then $P,Q\subset P\cup Q$, showing that $P\,\bot\,Q$ does not hold. Conversely, if some $R$ in $\mathscr{F(A)}$ contains both $P$ and $Q$, then there is an $\mA\in\mathscr{A}$ such that $R\subset \mA$, showing that $P\cup Q$ is a finite subset of $\mA$, i.e., $P\cup Q\in\mathscr{F(A)}$.
\end{proof}

\begin{theorem}\label{main}
Let $\PP$ be a poset. Then $\kappa\in\spec{\PP}$ if and only if $c(\mathscr{F}(\mathscr{A}))>\kappa$ for all $\mathscr{A}\in\mL_{\kappa}(\PP)$.
\end{theorem}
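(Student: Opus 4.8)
The plan is to prove both implications by contraposition, using the auxiliary poset $\mathscr{F}(\mathscr{A})$ in two complementary roles — once as a concrete test poset witnessing $\kappa\notin\spec{\PP}$, and once as a reservoir of antichains produced from a bad antichain of some product — with Lemma~\ref{tech1} serving as the dictionary translating incompatibility in $\mathscr{F}(\mathscr{A})$ into compatibility in $\PP$. Throughout I use that $c(\mathbb{R})\leq\kappa$ just means every antichain of $\mathbb{R}$ has size $\leq\kappa$, and $c(\mathbb{R})>\kappa$ means $\mathbb{R}$ has an antichain of size $\geq\kappa^{+}$.

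\emph{Necessity.} Assuming there is $\mathscr{A}\in\mL_\kappa(\PP)$ with $c(\mathscr{F}(\mathscr{A}))\leq\kappa$, I would show $\kappa\notin\spec{\PP}$ by taking $\QQ:=\mathscr{F}(\mathscr{A})$, which already satisfies $c(\QQ)\leq\kappa$. Since $\mathscr{A}$ is $\kappa$-large one fixes $\kappa^{+}$ distinct elements $p_\alpha\in\bigcup\mathscr{A}$; each belongs to some member of $\mathscr{A}$, so $\{p_\alpha\}\in\mathscr{F}(\mathscr{A})$. The point to verify is that $\{(p_\alpha,\{p_\alpha\}):\alpha<\kappa^{+}\}$ is an antichain of $\PP\times\mathscr{F}(\mathscr{A})$: a common lower bound $(p,P)$ of $(p_\alpha,\{p_\alpha\})$ and $(p_\beta,\{p_\beta\})$ would satisfy $p\leq p_\alpha,p_\beta$ and, since $\mathscr{F}(\mathscr{A})$ carries the reverse inclusion order, $\{p_\alpha,p_\beta\}\subset P$; but then $P\subset\mA$ for some $\mA\in\mathscr{A}$ forces $p_\alpha\,\bot\,p_\beta$, as $\mA$ is an antichain and $p_\alpha\neq p_\beta$, contradicting $p\leq p_\alpha,p_\beta$. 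Hence $c(\PP\times\QQ)\geq\kappa^{+}>\kappa$ while $c(\QQ)\leq\kappa$, so $\kappa\notin\spec{\PP}$.

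\emph{Sufficiency.} Conversely, assuming $\kappa\notin\spec{\PP}$, fix a poset $\QQ$ with $c(\QQ)\leq\kappa$ and an antichain $\{(p_\alpha,q_\alpha):\alpha<\kappa^{+}\}$ of $\PP\times\QQ$; I would produce $\mathscr{A}\in\mL_\kappa(\PP)$ with $c(\mathscr{F}(\mathscr{A}))\leq\kappa$. For $q\in\QQ$ set $A_q:=\{p_\alpha:q\leq q_\alpha\}$; this is an antichain of $\PP$, since distinct $p_\alpha,p_\beta\in A_q$ have $q_\alpha,q_\beta$ compatible (via $q$), so incompatibility of the pairs forces $p_\alpha\,\bot\,p_\beta$. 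Put $\mathscr{A}:=\{A_q:q\in\QQ\}$. Because $p_\alpha\in A_{q_\alpha}$ we get $\{p_\alpha:\alpha<\kappa^{+}\}\subset\bigcup\mathscr{A}$, and this set cannot have size $\leq\kappa$: otherwise, by a pigeonhole argument (as $\kappa^{+}$ is regular), some $p$ would satisfy $|\{\alpha:p_\alpha=p\}|=\kappa^{+}$, and then $\{q_\alpha:p_\alpha=p\}$ would be a $\kappa^{+}$-sized antichain of $\QQ$, contradicting $c(\QQ)\leq\kappa$. So $\mathscr{A}\in\mL_\kappa(\PP)$. Finally, to bound $c(\mathscr{F}(\mathscr{A}))$, let $\mathcal{W}$ be an antichain of $\mathscr{F}(\mathscr{A})$ and pick for each $P\in\mathcal{W}$ some $r_P\in\QQ$ with $P\subset A_{r_P}$. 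If $r_P$ and $r_{P'}$ were compatible, say via $r$, then $r\leq r_P$ gives $A_{r_P}\subset A_r$ (monotonicity of $q\mapsto A_q$, from transitivity of $\leq$), and likewise $A_{r_{P'}}\subset A_r$, so $P\cup P'$ is a finite subset of the member $A_r$ of $\mathscr{A}$, i.e. $P\cup P'\in\mathscr{F}(\mathscr{A})$; Lemma~\ref{tech1} then says $P$ and $P'$ are compatible in $\mathscr{F}(\mathscr{A})$, which is impossible. Thus $P\mapsto r_P$ is injective with pairwise incompatible image, so $\QQ$ has an antichain of size $|\mathcal{W}|$, giving $|\mathcal{W}|\leq c(\QQ)\leq\kappa$; hence $c(\mathscr{F}(\mathscr{A}))\leq\kappa$, the negation of the displayed condition.

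\emph{Main obstacle.} None of the steps is technically demanding once Lemma~\ref{tech1} is available; the point requiring the most care is verifying, in the sufficiency direction, that $\mathscr{A}$ is actually $\kappa$-large, which forces one to rule out a massive collapse of the first coordinates $p_\alpha$ by invoking $c(\QQ)\leq\kappa$. The only genuinely non-formal ingredient is the choice of the indexed antichains $A_q=\{p_\alpha:q\leq q_\alpha\}$ and their monotonicity $q'\leq q\Rightarrow A_q\subset A_{q'}$, which is precisely what makes compatibility in $\QQ$ force a single member of $\mathscr{A}$ to absorb the union of two incompatible finite sets.
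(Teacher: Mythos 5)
Your proof is correct and follows essentially the same route as the paper: the necessity direction uses $\QQ=\mathscr{F}(\mathscr{A})$ itself with the diagonal family $\{(p,\{p\}):p\in\bigcup\mathscr{A}\}$, and the sufficiency direction builds the same fibers $A_q=\{p_\alpha:q\leq q_\alpha\}$ and invokes Lemma~\ref{tech1} exactly as the paper does. The only (cosmetic) difference is the final bound on $c(\mathscr{F}(\mathscr{A}))$: the paper splits into two cases according to whether $\{r_F\}$ has size $\leq\kappa$ or $\kappa^{+}$, while you map an antichain of $\mathscr{F}(\mathscr{A})$ injectively onto an antichain of $\QQ$ -- a slightly streamlined packaging of the same argument.
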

\begin{proof}
If $\kappa\in\spec{\PP}$ and $\mathscr{A}\in\mL_\kappa(\PP)$ is such that $c(\mathscr{F}(\mathscr{A}))\leq \kappa$, then we have that $c(\PP\times\mathscr{F}(\mathscr{A}))\leq \kappa$. Now, let 
\[T:=\left\{(p,\{p\}):p\in\bigcup\mathscr{A}\right\}.\]

Since $\mathscr{A}$ is $\kappa$-large, the family $T$ cannot be an antichain in $\PP\times\mathscr{F}(\mathscr{A})$. Thus, there are $p,p'\in \bigcup\mathscr{A}$ with $p\ne p'$, $r\in\PP$ and $F\in\mathscr{F}(\mathscr{A})$ such that
\[(r,F)\sqsubseteq (p,\{p\}),(p',\{p'\})\]
implying  $p\not\!\!\bot p'$ and $\{p,p'\}\subseteq F\subseteq \mA$ for some $\mA\in\mathscr{A}$, showing that $\mA$ is not an antichain, a contradiction.

Conversely, supposing $\kappa\not\in\spec{\PP}$, we shall obtain a $\kappa$-large family $\mathscr{A}$ such that $c(\mathscr{F}(\mathscr{A}))\leq \kappa$. Let $\QQ$ be a poset witnessing $\kappa\not\in \spec{\PP}$, i.e., with $c(\QQ)\leq \kappa$ and such that there exists an antichain $\mW\subset \PP\times\QQ$ with $|\mW|=\kappa^+$. For each $r\in\QQ$, let $\mA_r:=\{p\in\PP:\exists q\in\QQ (r\leq q\text{ and }(p,q)\in\mW)\}.$
We claim that $\mathscr{A}:=\{\mA_r:r\in\QQ\}$ is the desired $\kappa$-large family.

Note that $\mA_r$ is clearly an antichain for each $r\in\QQ$, while $|\bigcup\mathscr{A}|=\kappa^+$ holds by the pigeonhole principle,
showing that $\mathscr{A}\in\mL_\kappa(\PP)$. It remains to show that $c(\mathscr{F}(\mathscr{A}))\leq \kappa$.
Indeed, for if $\mF\subset\mathscr{F}(\mathscr{A})$ is such that $|\mF|=\kappa^+$, for each $F\in\mF$ we take $r_F\in\QQ$ with $F\subset \mA_{r_F}$. Now we consider the set $\mR:=\{r_F:F\in\mF\}\subset\QQ$, that we shall use to obtain the desired inequality.

There are two cases:
\begin{enumerate}
\item if $|\mR|\leq \kappa$, then the pigeonhole principle gives $F,G\in\mF$ with $F\ne G$ and $r\in\mR$ such that $F\cup G\subset \mA_r$, showing that $F\cup G\in\mathscr{F(A)}$;
\item if $|\mR|=\kappa^+$, then $c(\QQ)\leq \kappa$ gives $F,G\in\mF$ with $F\ne G$ and $r\in\QQ$ such that $r\leq r_F,r_G$, showing that $\mA_{r_F}\cup\mA_{r_G}\subset \mA_r$, from which it follows that $F\cup G\in\mathscr{F(A)}$.
\end{enumerate}
In both cases, we obtain $F,G\in\mF$ with $F\ne G$, such that $F\cup G\in\mathscr{F}(\mathscr{A})$, which is equivalent to say that $F\,\bot\,G$ by the previous lemma, showing that $\mF$ is not an antichain of $\mathscr{F(A)}$, as desired.
\end{proof}

\begin{corollary}\label{maincor}
A poset $\PP$ is productively ccc if, and only if, $\mathscr{F(A)}$ is not ccc for all $\mathscr{A}\in\mL_\kappa(\PP)$.
\end{corollary}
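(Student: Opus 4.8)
The plan is to read this corollary off directly from Theorem~\ref{main} by specializing to $\kappa=\aleph_0$, together with two routine identifications. First I would unwind the definition of ``productively ccc'': a poset $\PP$ is productively ccc exactly when $\PP\times\QQ$ is ccc for every ccc poset $\QQ$. Since the cellularity of any poset is by definition an infinite cardinal, the inequality $c(\QQ)\leq\aleph_0$ is equivalent to $c(\QQ)=\aleph_0$, i.e.\ to $\QQ$ being ccc; the same remark applies to $\PP\times\QQ$. Hence the statement ``$\PP$ is productively ccc'' is literally the assertion $\aleph_0\in\spec{\PP}$.

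Next I would invoke Theorem~\ref{main} with $\kappa=\aleph_0$, which gives $\aleph_0\in\spec{\PP}$ if and only if $c(\mathscr{F}(\mathscr{A}))>\aleph_0$ for every $\mathscr{A}\in\mL_{\aleph_0}(\PP)$. The final step is the analogous observation on the other side: $\mathscr{F}(\mathscr{A})$ is a poset, so $c(\mathscr{F}(\mathscr{A}))\geq\aleph_0$ always holds, and therefore $c(\mathscr{F}(\mathscr{A}))>\aleph_0$ is equivalent to $c(\mathscr{F}(\mathscr{A}))\neq\aleph_0$, that is, to $\mathscr{F}(\mathscr{A})$ failing to be ccc. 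Chaining these three equivalences together yields precisely the statement of the corollary, with the convention that $\mL_\kappa(\PP)$ is taken at $\kappa=\aleph_0$.

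I expect no genuine obstacle here; the argument is pure bookkeeping on top of Theorem~\ref{main}. The only points meriting a line of explanation are the implicit choice $\kappa=\aleph_0$ in the quantifier ``for all $\mathscr{A}\in\mL_\kappa(\PP)$'' and the harmless passage between ``$c(\cdot)\leq\aleph_0$'' and ``ccc'' (equivalently, between ``$c(\cdot)>\aleph_0$'' and ``not ccc''), both of which are immediate from the fact that cellularity is always an infinite cardinal.
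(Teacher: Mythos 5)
Your proposal is correct and matches the paper's intent exactly: the corollary is simply Theorem~\ref{main} specialized to $\kappa=\aleph_0$, using that ``ccc'' means $c(\cdot)\leq\aleph_0$ (so ``productively ccc'' is $\aleph_0\in\spec{\PP}$) and that ``not ccc'' means $c(\mathscr{F}(\mathscr{A}))>\aleph_0$. Your reading of the quantifier as ranging over $\mL_{\aleph_0}(\PP)$ is also the right one, as the paper's own discussion following the corollary confirms.
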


The above characterizations become clearer in their contrapositive versions. For instance, Corollary~\ref{maincor} says that if a poset $\PP$ is not productively ccc, then there is a witness of the form $\mathscr{F(A)}$ for some $\mathscr{A}\in\mL_{\aleph_0}(\PP)$. Thus, if we have $\mL_{\aleph_0}(\PP)=\emptyset$, then $\PP$ is \emph{vacuously} productively ccc, since there are no witnesses to the contrary. This gives a very clean proof for the well known fact that countable posets are productively ccc. More generally, we have the following.
\begin{corollary}\label{trivial.cor}
If $|\PP|<\kappa$, then $\kappa\in\spec{\PP}$.
\end{corollary}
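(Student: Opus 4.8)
The plan is to deduce this immediately from Theorem~\ref{main} by showing that, whenever $|\PP|<\kappa$, the collection $\mL_{\kappa}(\PP)$ is empty, so that the universal condition characterizing membership in $\spec{\PP}$ holds vacuously. In other words, I would argue that there are no potential witnesses against $\kappa\in\spec{\PP}$.

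The key (and only) step is a cardinality count. Recall that a $\kappa$-large family $\mathscr{A}$ is by definition a collection of antichains of $\PP$ satisfying $\left|\bigcup\mathscr{A}\right|\geq\kappa^{+}$. Since each member of $\mathscr{A}$ is in particular a subset of $\PP$, we have $\bigcup\mathscr{A}\subseteq\PP$, and therefore $\left|\bigcup\mathscr{A}\right|\leq|\PP|<\kappa<\kappa^{+}$, contradicting $\kappa$-largeness. Hence $\mL_{\kappa}(\PP)=\emptyset$.

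With $\mL_{\kappa}(\PP)=\emptyset$, the statement ``$c(\mathscr{F}(\mathscr{A}))>\kappa$ for all $\mathscr{A}\in\mL_{\kappa}(\PP)$'' is vacuously true, and Theorem~\ref{main} yields $\kappa\in\spec{\PP}$. There is essentially no obstacle here: this is precisely the vacuous-truth mechanism already noted in the paragraph preceding the statement, which for $\kappa=\aleph_0$ recovers the fact that countable posets are productively ccc; the only point requiring (a trivial amount of) care is the inequality $\left|\bigcup\mathscr{A}\right|\leq|\PP|$.
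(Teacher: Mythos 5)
Your proof is correct and is essentially the paper's own argument: the paragraph preceding the corollary indicates exactly this vacuous-truth mechanism, since $|\bigcup\mathscr{A}|\leq|\PP|<\kappa<\kappa^{+}$ shows $\mL_{\kappa}(\PP)=\emptyset$, and Theorem~\ref{main} then applies vacuously.
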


%

\section{A few inhabitants of the spectrum}

Besides showing that $\spec{\PP}$ is nonempty for all posets $\PP$, Corollary~\ref{trivial.cor} indicates that the possibly interesting cardinals in the cellular spectrum are smaller or equal to $|\PP|$. In particular, it makes sense to define the \emph{productive cellularity} of $\PP$ to be the cardinal 
\[\operatorname{pc}(\PP):=\min\spec{\PP},\]
in reference to the fact that $\PP$ is productively ccc if, and only if, $\operatorname{pc}(\PP)=\aleph_0$.

Since any poset $\TT$ with a single element satisfies $c(\TT)\leq \kappa$ for all $\kappa\geq \aleph_0$, it follows that for every poset $\PP$ one has $c(\PP)\leq \operatorname{pc}(\PP)$, from which it follows
\begin{equation}
\label{ineq1}
c(\PP)\leq \operatorname{pc}(\PP)\leq |\PP|^{+}.\end{equation}
We shall explore the gap between the cardinals $\operatorname{pc}(\PP)$ and $|\PP|^{+}$ through the rest of this section.

Recall that a subset $D$ of a poset $\PP$ is called \emph{dense} if for all $p\in \PP$ there is a $d\in D$ such that $d\leq p$. The \emph{density} of $\PP$, denoted by $d(\PP)$, is the least infinite cardinal of the form $|D|$ with $D\subset \PP$ dense, which is a generalization of the separability in topological spaces.

Since the cardinality of an antichain of $\PP$ is bounded by the cardinality of every dense subset of $\PP$, it follows immediately that $c(\PP)\leq d(\PP)$. This inequality can strengthen in the following way.

\begin{theorem}\label{denspec}
If $\PP$ is a poset, then $d(\PP)\in\spec{\PP}$.
\end{theorem}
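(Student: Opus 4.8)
The plan is to invoke Theorem~\ref{main}. Writing $\kappa := d(\PP)$, it suffices to show that $c(\mathscr{F}(\mathscr{A})) > \kappa$ for every $\kappa$-large family $\mathscr{A}\in\mL_\kappa(\PP)$; equivalently, given such an $\mathscr{A}$, I must produce an antichain of $\mathscr{F}(\mathscr{A})$ of cardinality $\kappa^{+}$. The guiding idea is that \emph{singletons} already do the job, as long as their points are pairwise compatible in $\PP$: if $p\ne q$ are compatible in $\PP$, then no antichain of $\PP$ can contain $\{p,q\}$, so $\{p,q\}\not\in\mathscr{F}(\mathscr{A})$, and Lemma~\ref{tech1} turns this into $\{p\}\,\bot\,\{q\}$ in $\mathscr{F}(\mathscr{A})$.

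Thus the problem reduces to finding a set $B\subset\bigcup\mathscr{A}$ with $|B|=\kappa^{+}$ whose elements are pairwise compatible in $\PP$, and this is exactly where density is used. Fix a dense $D\subset\PP$ with $|D|=\kappa$, and for each $p\in\bigcup\mathscr{A}$ choose $d_p\in D$ with $d_p\leq p$. Since $\mathscr{A}$ is $\kappa$-large, $|\bigcup\mathscr{A}|\geq\kappa^{+}>\kappa=|D|$, so the pigeonhole principle produces a single $d\in D$ whose fibre $B:=\{p\in\bigcup\mathscr{A}:d_p=d\}$ has cardinality at least $\kappa^{+}$. Any two points of $B$ are bounded below by $d$, hence compatible in $\PP$, as required.

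Putting the pieces together: the map $p\mapsto\{p\}$ is injective, each $\{p\}$ with $p\in B$ lies in $\mathscr{F}(\mathscr{A})$ (as $p$ belongs to some member of $\mathscr{A}$), and by the first paragraph $\{\{p\}:p\in B\}$ is an antichain of $\mathscr{F}(\mathscr{A})$; it has cardinality at least $\kappa^{+}$, so $c(\mathscr{F}(\mathscr{A}))>\kappa$. As $\mathscr{A}\in\mL_\kappa(\PP)$ was arbitrary, Theorem~\ref{main} yields $d(\PP)\in\spec{\PP}$. I expect no serious obstacle here: the only step that needs a moment of care is the pigeonhole argument, where one uses $\kappa\cdot\kappa=\kappa$ to ensure that some fibre has size $\kappa^{+}$ and not merely $\kappa$; the rest is a direct unwinding of the definitions together with Lemma~\ref{tech1} and Theorem~\ref{main}.
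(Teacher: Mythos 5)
Your proposal is correct and follows essentially the same route as the paper: fix a dense $D$ of size $\kappa=d(\PP)$, use the pigeonhole principle to find $\kappa^{+}$ many elements of $\bigcup\mathscr{A}$ below a single $d\in D$, and observe via Lemma~\ref{tech1} that the corresponding singletons form an antichain of $\mathscr{F}(\mathscr{A})$ of size $\kappa^{+}$, so Theorem~\ref{main} applies. The only difference is cosmetic: you spell out the compatibility-implies-$\{p\}\,\bot\,\{q\}$ step (which the paper states as ``one can readily sees'' and later isolates as Lemma~\ref{tech2}).
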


\begin{proof}
Let $D\subset\PP$ be a dense subset and call $\kappa:=|D|$. We shall prove that $\kappa$ belongs to the cellular spectrum of $\PP$. By Theorem~\ref{main}, we need to take $\mathscr{A}\in\mL_{\kappa}(\PP)$ and show that $c(\mathscr{F}(\mathscr{A}))>\kappa$. Since $D$ is dense, it follows that for each $a\in\bigcup\mathscr{A}$ there exists a $\delta(a)\in D$ such that $\delta(a)\leq a$. Hence there exists a $d\in D$ such that the set $A:=\left\{a\in\bigcup\mathscr{A}:\delta(a)=d\right\}$ has cardinality at least $\kappa^+$. Finally, since $d\leq a$ for all $a\in A$, one can readily sees that the family $\{\{a\}:a\in A\}$ witnesses the inequality $c(\mathscr{F}(\mathscr{A}))>\kappa$, as desired.
\end{proof}

The arguments used above actually improve Corollary~\ref{trivial.cor}, allowing one to prove the following.
\begin{corollary}\label{trivial.den}
If $\kappa\geq d(\PP)$, then $\kappa\in\spec{\PP}$.
\end{corollary}
 In particular, \eqref{ineq1} can be replaced by
\begin{equation}
\label{ineq2}c(\PP)\leq \operatorname{pc}(\PP)\leq d(\PP).
\end{equation}

Note that in order to finish the proof of Theorem~\ref{denspec}, we used a very strong property of the family $A$, namely the existence of $d\in \PP$ such that $d\leq a$ for all $a\in A$. As we shall see below, this condition can be relaxed.

For a natural number $n\geq 2$, a subset $A\subset \PP$ is called $n$-\emph{linked} if for all $F\in [A]^{n}$ there exists $p_A\in\PP$ such that $p_A\leq p$ for each $p\in F$; $A$ is called \emph{centered} if $A$ is $n$-linked for all $n\geq 2$.  Then we have the following.
\begin{lemma}\label{tech2}
Let $\PP$ be a poset and $\mathscr{A}\in\mL_{\kappa}(\PP)$. If a subset $A\subset \bigcup\mathscr{A}$ is $n$-linked for some natural number $n\geq 2$, then the family $\{\{a\}:a\in A\}$ is an antichain in $\mathscr{F(A)}$.
\end{lemma}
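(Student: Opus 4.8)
The plan is to reduce the statement to Lemma~\ref{tech1} together with the defining property of the members of $\mathscr{A}$. First I would check that $\{\{a\}:a\in A\}$ really is a subset of $\mathscr{F(A)}$: given $a\in A\subset\bigcup\mathscr{A}$, pick $\mA\in\mathscr{A}$ with $a\in\mA$; then $\{a\}\in[\mA]^{<\aleph_0}$, so $\{a\}\in\mathscr{F(A)}$. Hence it remains only to verify that any two of these singletons are incompatible in $\mathscr{F(A)}$.

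Next I would fix distinct $a,b\in A$ and set $F_0=\{a,b\}$. Assuming, as one may, that $n\le|A|$, I would enlarge $F_0$ to some $F\in[A]^n$ and use the $n$-linkedness of $A$ to obtain $p_A\in\PP$ with $p_A\le p$ for all $p\in F$; in particular $p_A\le a$ and $p_A\le b$, so $\nobot{a}{b}$ in $\PP$. (Equivalently: when $|A|\ge n$ the set $A$ is automatically $2$-linked, and $2$-linkedness is exactly pairwise compatibility, which is all that will be used.) Since each $\mA\in\mathscr{A}$ is an antichain of $\PP$ and $a\ne b$ are compatible, no single $\mA$ can contain both $a$ and $b$; thus $\{a,b\}\notin[\mA]^{<\aleph_0}$ for every $\mA\in\mathscr{A}$, i.e., $\{a\}\cup\{b\}\notin\mathscr{F(A)}$.

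Finally I would invoke Lemma~\ref{tech1}, which converts this back into the desired incompatibility: $\{a\}\cup\{b\}\notin\mathscr{F(A)}$ means precisely $\{a\}\,\bot\,\{b\}$ in $\mathscr{F(A)}$. As $a$ and $b$ were arbitrary distinct elements of $A$, this shows $\{\{a\}:a\in A\}$ is an antichain of $\mathscr{F(A)}$. I do not anticipate any real obstacle: the proof is essentially a one-line unwinding of Lemma~\ref{tech1}, and the only minor points to watch are the degenerate case $|A|<n$ (where the hypothesis carries no information, and where in every application $A$ will be taken large anyway) and the observation that only pairwise compatibility in $\PP$, not the full strength of $n$-linkedness, is ever needed.
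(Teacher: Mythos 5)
Your proof is correct and follows essentially the same route as the paper's: extract pairwise compatibility in $\PP$ from $n$-linkedness, observe that no antichain $\mA\in\mathscr{A}$ can contain two compatible elements so $\{a\}\cup\{b\}\notin\mathscr{F(A)}$, and conclude via Lemma~\ref{tech1}. Your extra remarks (checking the singletons lie in $\mathscr{F(A)}$ and flagging the degenerate case $|A|<n$, where the $[A]^n$-formulation of $n$-linkedness is vacuous and the paper's implicit passage to $2$-linkedness needs $|A|\geq n$) are sound refinements of the same argument, not a different approach.
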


\begin{proof}
For $a,b\in A$ with $a\ne b$, there is an $r\in \PP$ such that $r\leq a,b$. Then we have $\{a\},\{b\}\in \mathscr{F(A)}$ while $\{a\}\cup\{b\}=\{a,b\}\not\in\mathscr{F(A)}$, yielding $\{a\}\,\bot\,\{b\}$, by Lemma~\ref{tech1}.
\end{proof}

Although the above lemma seems to be innocuous, it has some interesting consequences. Let us recall a few more concepts in order to apply Lemma~\ref{tech2}.
Following~\cite{Stevo87}, we say that a poset $\PP$ has the \emph{K$_n$-property} if for each $A\in[\PP]^{\aleph_1}$ there exists an $n$-linked subset $B\in[A]^{\aleph_1}$. By replacing the occurrence of the term ``$n$-linked'' with ``centered'', we obtain the property usually called \emph{$\aleph_1$-precaliber}, but for sake of brevity we shall refer to it simply by \emph{$K_\omega$-property}. The letter ``K'' is a reference to Knaster, who first considered this type of property, for $n=2$.

In the same way cellularity generalizes the countable chain condition, we define below the \emph{Knaster invariants of $\PP$} in order to generalize $K_n$ and $K_\sigma$ properties. More precisely, for each natural number $n\geq 2$ we define the cardinal
\begin{equation}\mathscr{K}_n(\PP):=\min\{\kappa\geq \aleph_0:\forall A\in[\PP]^{\kappa^{+}}\exists B\in[A]^{\kappa^{+}}\left(B\text{ is $n$-linked}\right)\},\end{equation}
and we let
\begin{equation}\mathscr{K}_\omega(\PP):=\min\{\kappa\geq \aleph_0:\forall A\in [\PP]^{\kappa^{+}}\exists B\in [A]^{\kappa^{+}}\left(B\text{ is centered}\right)\}.\end{equation}

Note that for a poset $\PP$ and an ordinal $\alpha\in [2,\omega]$, $\PP$ has the $K_\alpha$-property if, and only if, $\mathscr{K}_\alpha(\PP)=\aleph_0$. The relations between the Knaster properties with the countable chain condition are in some sense preserved in the spectral context.

For a dense subset $D\subset \PP$ with $|D|=\kappa$, the same reasoning applied in Theorem~\ref{denspec} allows one to prove that for every $A\in [\PP]^{\kappa^{+}}$ there is a centered subset $B\in [A]^{\kappa^{+}}$, showing that $\mathscr{K}_\omega(\PP)\leq d(\PP)$. Since we clearly have $\mathscr{K}_\alpha(\PP)\leq\mathscr{K}_{\beta}(\PP)$ for $\alpha\leq \beta\leq \omega$, it follows that
\[\mathscr{K}_2(\PP)\leq \mathscr{K}_n(\PP)\leq\mathscr{K}_{n+1}(\PP)\leq \mathscr{K}_\omega(\PP)\leq d(\PP)\]
holds for every poset $\PP$. We now put $\operatorname{pc}(\PP)$ in the above inequalities.

\begin{theorem}
If $\PP$ is a poset, then $\mathscr{K}_2(\PP)\in\spec{\PP}$.
\end{theorem}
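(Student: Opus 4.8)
The plan is to apply Theorem~\ref{main}, so I would fix a cardinal $\kappa:=\mathscr{K}_2(\PP)$, take an arbitrary $\kappa$-large family $\mathscr{A}\in\mL_\kappa(\PP)$, and show $c(\mathscr{F}(\mathscr{A}))>\kappa$ by exhibiting an antichain of size $\kappa^+$ in $\mathscr{F}(\mathscr{A})$. First I would look at the set $\bigcup\mathscr{A}\subset\PP$, which has cardinality at least $\kappa^+$, so by choosing a subset I may assume it is a member of $[\PP]^{\kappa^+}$. By the very definition of $\mathscr{K}_2(\PP)=\kappa$, applied to this set, there is a $2$-linked subset $B\in[\bigcup\mathscr{A}]^{\kappa^+}$. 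Then Lemma~\ref{tech2} (with $n=2$) tells us immediately that $\{\{b\}:b\in B\}$ is an antichain in $\mathscr{F}(\mathscr{A})$; since it has cardinality $\kappa^+>\kappa$, we conclude $c(\mathscr{F}(\mathscr{A}))>\kappa$, and Theorem~\ref{main} gives $\kappa\in\spec{\PP}$.

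There is one subtlety to address carefully: the definition of $\mathscr{K}_2(\PP)$ as a minimum asserts the linking property for all $A\in[\PP]^{\kappa^+}$ where $\kappa=\mathscr{K}_2(\PP)$, so one must be sure that the minimum is actually attained, i.e.\ that the set $\{\kappa\geq\aleph_0:\forall A\in[\PP]^{\kappa^+}\exists B\in[A]^{\kappa^+}(B\text{ is }2\text{-linked})\}$ is nonempty and that membership is upward-closed enough for the minimum to satisfy the defining property. The set is nonempty because $d(\PP)$ belongs to it (as noted just before the theorem, $\mathscr{K}_\omega(\PP)\leq d(\PP)$, hence $\mathscr{K}_2(\PP)\leq d(\PP)$), and since any class of cardinals bounded below by $\aleph_0$ has a minimum, $\kappa:=\mathscr{K}_2(\PP)$ is well-defined and itself lies in the set; thus the $2$-linking property does hold for every $A\in[\PP]^{\kappa^+}$. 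I would state this briefly to justify the application of the definition to $\bigcup\mathscr{A}$.

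I expect the argument to be short and the main (minor) obstacle to be purely bookkeeping: matching the cardinality bookkeeping between ``$|\bigcup\mathscr{A}|\geq\kappa^+$'' and ``apply the $\mathscr{K}_2$-property to a set of size exactly $\kappa^+$.'' One passes from the former to the latter simply by taking any subset of $\bigcup\mathscr{A}$ of cardinality $\kappa^+$; the resulting $2$-linked $B$ is still a subset of $\bigcup\mathscr{A}$, which is all Lemma~\ref{tech2} requires. No compatibility-versus-incompatibility translation needs to be redone by hand, since Lemma~\ref{tech1} is already packaged inside Lemma~\ref{tech2}. So the proof reduces to: well-definedness of $\kappa$, an application of the defining property of $\mathscr{K}_2(\PP)$, an invocation of Lemma~\ref{tech2}, and a final appeal to Theorem~\ref{main}.
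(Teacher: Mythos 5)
Your proposal is correct and is essentially the paper's own argument: pass from $|\bigcup\mathscr{A}|\geq\kappa^{+}$ to a subset $A\in[\bigcup\mathscr{A}]^{\kappa^{+}}$, extract a $2$-linked $B\in[A]^{\kappa^{+}}$ from the definition of $\mathscr{K}_2(\PP)$, and conclude via Lemma~\ref{tech2} and Theorem~\ref{main}. The extra remarks on the well-definedness of the minimum are harmless bookkeeping that the paper leaves implicit.
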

\begin{proof}
Let $\kappa:=\mathscr{K}_2(\PP)$ and let $\mathscr{A}\in\mL_\kappa(\PP)$. Since $|\bigcup\mathscr{A}|>\kappa$, there exists an $A\subseteq \bigcup\mathscr{A}$ such that $|A|=\kappa^{+}$. Now, there exists a $2$-linked subset $B\in[A]^{\kappa^{+}}$, so the conclusion follows from Lemma~\ref{tech2}.
\end{proof}

Differently of what happened in Theorem~\ref{denspec}, we are not able to adapt the previous argument to show that every $\kappa\geq \mathscr{K}_2(\PP)$ belongs to $\spec{\PP}$. Still, Lemma~\ref{tech2} can be used similarly to prove that the cardinal invariants $\mathscr{K}_\alpha(\PP)$ belongs to $\spec{\PP}$ for all $\alpha\in[2,\omega]$.
In summary, for every poset $\PP$ and every natural number $n\geq 2$, we have
\begin{equation}
\label{ultimate.ineq}c(\PP)\leq \operatorname{pc}(\PP)\leq \mathscr{K}_2(\PP)\leq \mathscr{K}_n(\PP)\leq \mathscr{K}_{n+1}(\PP)\leq\mathscr{K}_\sigma(\PP)\leq d(\PP).\end{equation}

\section{The spectra of products and some topological translations}

Although the (productive) cellularity of posets may be interesting by itself, the topological interpretations of the previous results deserve some attention. For a warming up example, the topological counterpart of Theorem \ref{denspec} says that separable spaces are productively ccc. Indeed, similar to what occurs with cellularity, the \emph{density} of a topological space $X$, denoted by $d(X)$, is the density of the poset $\OO_X$. Thus, in this context, Theorems~\ref{spectop=specposet} and~\ref{denspec} together say that the density of a space $X$ belongs to $\spec{X}$ and, since $X$ is separable if and only if $d(X)=\aleph_0$, our claim follows. However, even more is known to be true:

\begin{proposition}
[Fremlin~\cite{Fremlinbook}, Corollary 12J] Every product of separable spaces is productively ccc.
\end{proposition}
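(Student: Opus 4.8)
The plan is to reduce the statement about arbitrary products of separable spaces to the intrinsic poset characterization of Theorem~\ref{main}. Let $\{X_i:i\in I\}$ be a family of separable spaces and put $X:=\prod_{i\in I}X_i$. By Theorem~\ref{spectop=specposet} it suffices to prove that $\aleph_0\in\spec{\mathbb{O}_X}$, i.e., by Corollary~\ref{maincor}, that $\mathscr{F}(\mathscr{A})$ fails to be ccc for every $\mathscr{A}\in\mL_{\aleph_0}(\mathbb{O}_X)$. Fix such a family $\mathscr{A}$; then $U:=\bigcup\mathscr{A}$ is an uncountable collection of nonempty open subsets of $X$. We may assume each member of $U$ is a basic open box, i.e., has the form $\bigcap_{i\in F}\pi_i^{-1}[V_i]$ for some finite $F\subset I$ and nonempty open $V_i\subset X_i$; this is legitimate because shrinking the elements of an antichain to basic open sets keeps it an antichain, and it only enlarges the set of finite subsets lying inside a common antichain, so a witnessing antichain for the shrunk family still witnesses $c(\mathscr{F}(\mathscr{A}))>\aleph_0$. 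The goal is then to extract from $U$ an uncountable $2$-linked (indeed centered) subfamily, because by Lemma~\ref{tech2} the singletons of such a subfamily form an uncountable antichain in $\mathscr{F}(\mathscr{A})$.

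The core of the argument is a $\Delta$-system plus a separability/independence computation. First apply the $\Delta$-system lemma to the supports $\{\dom(W):W\in U\}$ (finite sets) to get an uncountable $U'\subset U$ whose supports form a $\Delta$-system with root $R$. For $i\in R$, fix a countable dense subset $D_i\subset X_i$; since each $W\in U'$ meets the basic open set on coordinate $i$, we may pick $d_i^W\in D_i\cap V_i^W$, thereby assigning to each $W\in U'$ a function $R\to\prod_{i\in R}D_i$, a countable set. By the pigeonhole principle there is an uncountable $U''\subset U'$ on which this assignment is constant; write $d_i$ for the common value at $i\in R$. I claim $U''$ is centered. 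Given finitely many $W_1,\dots,W_m\in U''$, their supports pairwise intersect exactly in $R$, and outside $R$ the constraints are on pairwise disjoint coordinate sets, so they impose no mutual conflict; on $R$ every $W_k$ contains the point with coordinate $d_i$ at $i\in R$, so the box $\bigcap_{i\in R}\pi_i^{-1}[\{$small nbhd of $d_i\}]$ (or just the intersection of the relevant $V_i^{W_k}$'s, all of which contain $d_i$) together with the disjoint off-root constraints yields a nonempty open set below all of $W_1,\dots,W_m$ in $\mathbb{O}_X$. Hence $U''$ is $2$-linked, and Lemma~\ref{tech2} finishes the proof.

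The main obstacle is the reduction to basic open boxes and the verification that this reduction does not cost us the witness. One has to be careful that the elements of $\mathscr{A}$ are antichains in $\mathbb{O}_X$, not arbitrary open families, and that replacing each $W$ by a basic open subset $W^*\subset W$ keeps incompatibility: if $W^*\,\bot\,W'^*$ in the product, so is $W\,\bot\,W'$ a priori not guaranteed — but we only need the \emph{opposite} direction, namely that the new family is still $\aleph_0$-large (clear, it has the same cardinality after removing possible repetitions via a further pigeonhole) and that a $2$-linked subfamily of the $W^*$'s lifts to an uncountable antichain in the \emph{original} $\mathscr{F}(\mathscr{A})$. Since compatibility in $\mathbb{O}_X$ is just ``nonempty intersection'', and $W^*\subset W$, a common refinement of $W_1^*,\dots,W_m^*$ is automatically a common refinement of $W_1,\dots,W_m$; so $2$-linkedness of the shrunk family implies $2$-linkedness of the corresponding subfamily of $U$, and Lemma~\ref{tech2} applies to the original $\mathscr{F}(\mathscr{A})$ directly. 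Thus the only genuine subtlety is bookkeeping, and once the $\Delta$-system and the countable-dense pigeonhole are in place the centeredness computation is the routine ``finite box intersection'' argument familiar from the proof that products of separable (equivalently, ccc-productive) spaces are ccc.

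Finally, I would remark that the same proof shows more: the uncountable subfamily produced is centered, not merely $2$-linked, which re-proves in passing that a product of separable spaces has precaliber $\aleph_1$ — consistent with \eqref{ultimate.ineq} via $\operatorname{pc}(\mathbb{O}_X)\leq\mathscr{K}_\sigma(\mathbb{O}_X)\leq d$ coordinatewise, though the product density itself need not be countable, which is exactly why the spectral/intrinsic characterization rather than a naive density bound is needed here.
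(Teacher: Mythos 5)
Your argument is essentially correct, but it takes a genuinely different route from the paper, which in fact offers no proof of this proposition at all: it is quoted from Fremlin, and its order-theoretic counterpart is only recovered later, by combining the identity $\spec{\prod_{i\in I}^{\operatorname{fin}}\PP_i}=\bigcap_{i\in I}\spec{\PP_i}$ (whose nontrivial inclusion the paper itself leaves as ``a straightforward application of the $\Delta$-system lemma'') with Corollary~\ref{trivial.den} applied to each separable factor. You instead verify the intrinsic criterion of Theorem~\ref{main} directly on the product space $X$: given $\mathscr{A}\in\mL_{\aleph_0}(\OO_X)$, you shrink each of the uncountably many members of $\bigcup\mathscr{A}$ to a basic box, run the classical $\Delta$-system plus dense-points pigeonhole to get an uncountable centered family of boxes, and lift compatibility back to the original open sets, so that Lemma~\ref{tech2} yields an uncountable antichain in $\mathscr{F}(\mathscr{A})$. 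In effect you prove that a product of separable spaces has the $K_\omega$-property (precaliber $\aleph_1$), i.e.\ $\mathscr{K}_\omega(\OO_X)=\aleph_0$, and then use the paper's ``Knaster invariants lie in the spectrum'' mechanism; the paper instead pushes all the $\Delta$-system work into the product-spectrum theorem for posets and never touches the product space itself. Your route is self-contained and correctly identifies why a naive appeal to Theorem~\ref{denspec} fails (the density of the product can exceed $\aleph_0$ when there are more than continuum many factors), while the paper's route is more modular and gives the stronger conclusion that $\sup_{i\in I}d(\PP_i)$ belongs to the spectrum of any finite-support product. Two bookkeeping remarks: the worry in your first paragraph about the shrunk family remaining an antichain family, or remaining $\aleph_0$-large, is superfluous --- as you yourself note later, the shrunk boxes never need to lie in $\bigcup\mathscr{A}$; you only need that compatibility of the $W^*$'s implies compatibility of the original $W$'s, which are the elements fed to Lemma~\ref{tech2}. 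Also, the $\Delta$-system lemma should be applied to the indexed family of supports: either uncountably many $W$'s share a single support, in which case that support serves as the root and the pigeonhole over the countable set $\prod_{i\in R}D_i$ already finishes the centeredness argument, or there are uncountably many distinct supports and the lemma applies verbatim; with that said, the proof is complete.
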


The above proposition make us wonder about the behavior of the cellular spectrum of a poset with respect to products. 
The very definition of the spectrum implies that for posets $\PP$ and $\QQ$ one has
\[\kappa\in\spec{\PP}\cap\spec{\QQ}\Rightarrow\kappa\in\spec{\PP\times\QQ},\]
showing that $\spec{\PP}\cap \spec{\QQ}\subset\spec{\PP\times\QQ}$. The reverse inclusion follows from the next easy lemma, whose proof is left for the reader.

\begin{lemma}\label{proj}
If $\varphi\colon\PP\to\QQ$ is an increasing function from the poset $\PP$ onto the poset $\QQ$, then $\spec{\PP}\subset\spec{\QQ}$.
\end{lemma}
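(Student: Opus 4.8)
The plan is to show that if $\varphi\colon\PP\to\QQ$ is increasing and surjective, then every poset that witnesses $\kappa\notin\spec{\QQ}$ can be lifted through $\varphi$ to a poset that witnesses $\kappa\notin\spec{\PP}$. Concretely, suppose $\kappa\in\spec{\PP}$ and let $\RR$ be any poset with $c(\RR)\leq\kappa$; I want $c(\QQ\times\RR)\leq\kappa$. The natural move is to consider the induced increasing surjection $\varphi\times\mathrm{id}_\RR\colon\PP\times\RR\to\QQ\times\RR$; since $\kappa\in\spec{\PP}$ gives $c(\PP\times\RR)\leq\kappa$, it suffices to know that an increasing surjection cannot increase cellularity. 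So the argument reduces to the elementary fact: if $\psi\colon\mathbb{S}\to\mathbb{T}$ is increasing and onto, then $c(\mathbb{T})\leq c(\mathbb{S})$.

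First I would prove that reduction lemma. Given an antichain $A\subset\mathbb{T}$, pick for each $a\in A$ some $s_a\in\psi^{-1}(a)$ (using surjectivity), and set $A':=\{s_a:a\in A\}$. The key point is that $A'$ is an antichain of $\mathbb{S}$: if $r\leq s_a, s_b$ in $\mathbb{S}$ with $a\neq b$, then applying $\psi$ and using that $\psi$ is increasing yields $\psi(r)\leq a,b$, contradicting incompatibility of $a$ and $b$ in $\mathbb{T}$; also $a\mapsto s_a$ is injective since $\psi(s_a)=a$. Hence $|A|=|A'|\leq c(\mathbb{S})$ whenever $A$ is infinite, giving $c(\mathbb{T})\leq c(\mathbb{S})$.

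Then I would assemble the pieces: given $\RR$ with $c(\RR)\leq\kappa$, the map $\varphi\times\mathrm{id}_\RR$ is increasing (coordinatewise, immediate from the definition of $\sqsubseteq$) and surjective (since $\varphi$ is). Therefore $c(\QQ\times\RR)\leq c(\PP\times\RR)$ by the reduction lemma, and $c(\PP\times\RR)\leq\kappa$ since $\kappa\in\spec{\PP}$, so $c(\QQ\times\RR)\leq\kappa$. As $\RR$ was arbitrary, $\kappa\in\spec{\QQ}$, proving $\spec{\PP}\subset\spec{\QQ}$.

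There is no serious obstacle here; the only mild subtlety is bookkeeping around the definition of cellularity as a supremum over \emph{infinite} antichains, which is why one should phrase the reduction lemma as $c(\mathbb{T})\leq c(\mathbb{S})$ rather than worrying about finite antichains, and why the injectivity remark $\psi(s_a)=a$ is worth stating explicitly. Everything else is a direct unwinding of definitions, which is presumably why the authors left it to the reader.
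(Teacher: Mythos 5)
Your proposal is correct: the reduction lemma (an increasing surjection $\psi$ cannot increase cellularity, proved by lifting an antichain of $\mathbb{T}$ through $\psi$) applied to $\varphi\times\mathrm{id}_{\mathbb{R}}$ is exactly the routine argument the authors intended when they left the proof of Lemma~\ref{proj} to the reader. The paper gives no proof to compare against, but your two-step unwinding, including the injectivity remark $\psi(s_a)=a$ and the coordinatewise check that $\varphi\times\mathrm{id}$ is increasing and onto, is complete and has no gaps.
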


\begin{theorem}\label{thatone}
If $\PP$ and $\QQ$ are posets, then $\spec{\PP\times\QQ}=\spec{\PP}\cap\spec{\QQ}$.
\end{theorem}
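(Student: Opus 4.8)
The plan is to prove the two inclusions separately, using that the forward inclusion $\spec{\PP}\cap\spec{\QQ}\subseteq\spec{\PP\times\QQ}$ has already been observed: if $c(\RR)\leq\kappa$ then $\kappa\in\spec{\QQ}$ gives $c(\QQ\times\RR)\leq\kappa$, and then $\kappa\in\spec{\PP}$ applied to the poset $\QQ\times\RR$ gives $c(\PP\times(\QQ\times\RR))\leq\kappa$; since $\PP\times(\QQ\times\RR)$ is order-isomorphic to $(\PP\times\QQ)\times\RR$, this yields $\kappa\in\spec{\PP\times\QQ}$. So the whole content is the reverse inclusion $\spec{\PP\times\QQ}\subseteq\spec{\PP}$ and $\spec{\PP\times\QQ}\subseteq\spec{\QQ}$, and by symmetry it suffices to handle the first.

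For this I would invoke Lemma~\ref{proj}: I claim the projection $\pi\colon\PP\times\QQ\to\PP$, $(p,q)\mapsto p$, fits the hypotheses of that lemma, provided $\QQ$ is nonempty (the degenerate case $\QQ=\emptyset$ makes $\PP\times\QQ=\emptyset$, and then $\spec{\PP\times\QQ}$ is the class of all infinite cardinals, so the inclusion is trivial — I would dispose of this at the start). Indeed $\pi$ is surjective since $\QQ\neq\emptyset$, and it is increasing because $(p,q)\sqsubseteq(p',q')$ entails $p\leq p'$ by definition of the product preorder. Hence Lemma~\ref{proj} gives $\spec{\PP\times\QQ}\subseteq\spec{\PP}$ directly. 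Symmetrically $\spec{\PP\times\QQ}\subseteq\spec{\QQ}$, so $\spec{\PP\times\QQ}\subseteq\spec{\PP}\cap\spec{\QQ}$, and combined with the forward inclusion we are done.

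There is essentially no obstacle here; the proof is a two-line combination of the definition of the spectrum (for $\supseteq$) and Lemma~\ref{proj} (for $\subseteq$). The only points requiring a word of care are the associativity identification $(\PP\times\QQ)\times\RR\cong\PP\times(\QQ\times\RR)$ as preorders, which is immediate from unravelling $\sqsubseteq$, and the bookkeeping about the empty poset so that the surjectivity hypothesis of Lemma~\ref{proj} is legitimately met. I would write the argument as follows.

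\begin{proof}
The inclusion $\spec{\PP}\cap\spec{\QQ}\subseteq\spec{\PP\times\QQ}$ was already observed above: if $c(\RR)\leq\kappa$ and $\kappa\in\spec{\PP}\cap\spec{\QQ}$, then $c(\QQ\times\RR)\leq\kappa$, hence $c(\PP\times(\QQ\times\RR))\leq\kappa$, and since $(\PP\times\QQ)\times\RR$ is order-isomorphic to $\PP\times(\QQ\times\RR)$ we get $c((\PP\times\QQ)\times\RR)\leq\kappa$, i.e., $\kappa\in\spec{\PP\times\QQ}$.

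For the reverse inclusion, if $\QQ=\emptyset$ then $\PP\times\QQ=\emptyset$ and $\spec{\PP\times\QQ}$ is the class of all infinite cardinals, so the inclusion is vacuous; similarly if $\PP=\emptyset$. Otherwise, the projection $\pi\colon\PP\times\QQ\to\PP$ given by $\pi(p,q)=p$ is onto (as $\QQ\neq\emptyset$) and increasing, since $(p,q)\sqsubseteq(p',q')$ implies $p\leq p'$. By Lemma~\ref{proj}, $\spec{\PP\times\QQ}\subseteq\spec{\PP}$, and by the symmetric argument $\spec{\PP\times\QQ}\subseteq\spec{\QQ}$. Therefore $\spec{\PP\times\QQ}\subseteq\spec{\PP}\cap\spec{\QQ}$, which completes the proof.
\end{proof}
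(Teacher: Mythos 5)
Your proof is correct and follows essentially the same route as the paper: the forward inclusion comes straight from the definition of the spectrum, and the reverse inclusion is obtained by applying Lemma~\ref{proj} to the two coordinate projections, which are increasing and surjective. The extra care you take with associativity and the empty-poset case is fine but not a difference in method.
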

\begin{proof}
We already have $\spec{\PP}\cap\spec{\QQ}\subset \spec{\PP\times\QQ}$. Now, since the projections $\PP\times\QQ\to \PP$ and $\PP\times\QQ\to\QQ$ are both increasing and surjective, the reverse inclusion follows from the previous lemma.\end{proof}

In order to extend this result for arbitrary products of posets, we need to consider a slightly different product, closer to the topological counterpart of arbitrary products. We follow the definitions presented by Kunen in~\cite{Kunen}, where the reader may find more details. 

Let $\{\PP_i:i\in I\}$ be a nonempty family of posets such that for each $i\in I$ there is a largest element $1_i\in\PP_i$. Such posets are called \emph{forcing posets} in \cite{Kunen}. The \emph{finite support product} of the forcing posets $\PP_i$, denoted by $\prod_{i\in I}^{\operatorname{fin}}\PP_i$, is the subset of $\prod_{i\in I} \PP_i$ whose elements are those $I$-tuples $f$ such that $|\{i\in I:f_i\ne 1_i\}|<\aleph_0$, endowed with the coordinate-wise preorder. In some sense, this is the order-theoretic version of the standard topology on arbitrary products of topological spaces.

\begin{theorem}
For a nonempty family $\{\PP_i:i\in I\}$ of forcing posets one has
$\spec{\prod_{i\in I}^{\operatorname{fin}}\PP_i}=\bigcap_{i\in I}\spec{\PP_i}$.
\end{theorem}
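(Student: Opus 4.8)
The plan is to prove both inclusions, mirroring the finite case but handling the finite support structure carefully. For the inclusion $\bigcap_{i\in I}\spec{\PP_i}\subset\spec{\prod_{i\in I}^{\operatorname{fin}}\PP_i}$, I would proceed by induction on $|I|$ using the definition of the spectrum directly, combined with a standard $\Delta$-system argument. The key observation is that $\prod_{i\in I}^{\operatorname{fin}}\PP_i$ is, up to the obvious order isomorphism, the \emph{increasing union} of the finite subproducts $\prod_{i\in J}\PP_i$ (with the remaining coordinates fixed at their top elements $1_i$) as $J$ ranges over finite subsets of $I$. So the real content is: if $\kappa$ lies in $\spec{\PP_i}$ for every $i$, then $\kappa\in\spec{\prod_{i\in J}\PP_i}$ for every finite $J$ (immediate from the previous theorem by induction on $|J|$), and then one must pass from the finite subproducts to the full finite support product.

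For that passage, suppose $\QQ$ is a poset with $c(\QQ)\leq\kappa$ and $\mW$ is an antichain of $(\prod_{i\in I}^{\operatorname{fin}}\PP_i)\times\QQ$ with $|\mW|=\kappa^{+}$. Each element of $\mW$ has a finite support in $I$; applying the $\Delta$-system lemma for $\kappa^{+}$ (valid since $\kappa^{+}$ is regular and we are dealing with finite sets) I can thin $\mW$ to $\mW'$ of size $\kappa^{+}$ whose supports form a $\Delta$-system with root $J\subset I$ finite. On the coordinates in $J$ the elements vary freely, but on the coordinates outside $J$ any two elements of $\mW'$ have disjoint supports, hence are automatically compatible there (one can amalgamate two finitely supported tuples with disjoint supports). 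Therefore incompatibility of two elements of $\mW'$ in the big product is witnessed already in the $J$-coordinates together with the $\QQ$-coordinate; that is, $\mW'$ projects to an antichain of $(\prod_{i\in J}\PP_i)\times\QQ$ — and this projection is injective once we also remember the values on $J$, or at worst, a further pigeonhole thinning of size $\kappa^{+}$ makes it injective. This contradicts $\kappa\in\spec{\prod_{i\in J}\PP_i}$. I expect this $\Delta$-system-plus-amalgamation step to be the main obstacle, mostly in getting the bookkeeping right so that the thinned antichain genuinely has size $\kappa^{+}$ and genuinely is an antichain downstairs.

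The reverse inclusion $\spec{\prod_{i\in I}^{\operatorname{fin}}\PP_i}\subset\bigcap_{i\in I}\spec{\PP_i}$ is the easy direction and follows from Lemma~\ref{proj}: for each fixed $i\in I$ the projection $\prod_{i\in I}^{\operatorname{fin}}\PP_i\to\PP_i$ is increasing, and it is surjective because $\PP_i$ has a top element $1_i$, so every $p\in\PP_i$ is hit by the tuple that is $p$ in coordinate $i$ and $1_j$ elsewhere (this tuple has finite support). Hence $\spec{\prod_{i\in I}^{\operatorname{fin}}\PP_i}\subset\spec{\PP_i}$ for each $i$, and intersecting over $i\in I$ gives the claim. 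The nonemptiness of $I$ and the existence of top elements are exactly what makes these projections surjective, so it is worth remarking that this is where the ``forcing poset'' hypothesis is used.

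Finally, I would close by noting the topological corollary: since $\spec{X}=\spec{\OO_X}$ by Theorem~\ref{spectop=specposet}, and since for a product of spaces $\prod_{i\in I}X_i$ the poset of basic open sets is naturally (cofinal in, hence cellular-equivalent to) a finite support product of the $\OO_{X_i}$, one recovers that $\spec{\prod_{i\in I}X_i}=\bigcap_{i\in I}\spec{X_i}$, and in particular Fremlin's result that an arbitrary product of separable spaces is productively ccc, via $d(X_i)=\aleph_0$ for each $i$ together with Corollary~\ref{trivial.den}.
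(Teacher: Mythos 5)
Your proposal is correct and follows essentially the same route as the paper: the easy inclusion via Lemma~\ref{proj} using the top elements to get surjective increasing projections, and the hard inclusion by reducing, via the $\Delta$-system lemma applied to the supports of a putative antichain of size $\kappa^{+}$, to a finite subproduct handled by the binary product theorem. Your worry about injectivity of the projection onto the root coordinates is unnecessary (two distinct members of the thinned antichain agreeing on the root and on the $\QQ$-coordinate would be compatible by amalgamating their disjoint off-root supports, a contradiction), but this does not affect the argument.
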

\begin{proof}
The inclusion $\spec{\prod_{i\in\mI}^{\operatorname{fin}}\PP_i}\subset \bigcap_{i\in\mI}\spec{\PP_i}$ follows from Lemma~\ref{proj}. On the other hand, the reverse inclusion can be proved with a straightforward application of the $\Delta$-system lemma in conjunction with Theorem~\ref{thatone}.
\end{proof}

Once the above theorem is established, Fremlin's result about separable spaces becomes the topological counterpart of the following.

\begin{corollary}
If $\{\PP_i:i\in I\}$ is a nonempty family of forcing posets, then $\sup_{i\in I}d(\PP_i)\in\spec{\prod_{i\in I}^{\operatorname{fin}}\PP_i}$.
\end{corollary}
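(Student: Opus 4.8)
The plan is to reduce the statement to a direct chaining of the preceding theorem with Corollary~\ref{trivial.den}. Set $\kappa:=\sup_{i\in I}d(\PP_i)$; since $I$ is nonempty and each $d(\PP_i)$ is an infinite cardinal, $\kappa$ is itself an infinite cardinal, so it makes sense to ask whether $\kappa\in\spec{\prod_{i\in I}^{\operatorname{fin}}\PP_i}$.

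First I would invoke the theorem proved just above, which identifies $\spec{\prod_{i\in I}^{\operatorname{fin}}\PP_i}$ with $\bigcap_{i\in I}\spec{\PP_i}$. Consequently it is enough to verify that $\kappa\in\spec{\PP_i}$ for every single index $i\in I$, after which the desired membership follows by taking the intersection.

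To handle a fixed $i\in I$, note that by the definition of the supremum we have $d(\PP_i)\leq\kappa$. Corollary~\ref{trivial.den} asserts precisely that every cardinal which is $\geq d(\PP_i)$ belongs to $\spec{\PP_i}$; applying it with this $\kappa$ yields $\kappa\in\spec{\PP_i}$. Since $i$ was arbitrary, $\kappa\in\bigcap_{i\in I}\spec{\PP_i}=\spec{\prod_{i\in I}^{\operatorname{fin}}\PP_i}$, completing the argument.

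I do not expect any real obstacle here; the only subtlety worth recording is that $\kappa$ may strictly exceed each individual $d(\PP_i)$ — this happens whenever infinitely many of the densities are cofinal in $\kappa$ without any one of them attaining it — so one genuinely needs the ``upward closed'' form supplied by Corollary~\ref{trivial.den}, and not merely the weaker statement $d(\PP_i)\in\spec{\PP_i}$ coming from Theorem~\ref{denspec}. Everything else is immediate from the two results already established.
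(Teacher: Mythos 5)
Your argument is correct and is essentially the paper's own proof: both reduce via the product-spectrum theorem to showing $\sup_{i\in I}d(\PP_i)\in\spec{\PP_j}$ for each $j$, which follows from $d(\PP_j)\leq\sup_{i\in I}d(\PP_i)$ together with Corollary~\ref{trivial.den}. Your remark that the upward-closed form of Corollary~\ref{trivial.den} (and not merely Theorem~\ref{denspec}) is what is really needed matches the paper's use of it exactly.
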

\begin{proof}
Since $d(\PP_j)\leq \sup_{i\in I}d(\PP_i)$ and $d(\PP_j)\in \spec{\PP_j}$, it follows from Corollary~\ref{trivial.den} that $\sup_{i\in I}d(\PP_i)\in\spec{\PP_j}$ for all $j\in I$, showing that 
\[\sup_{i\in I}d(\PP_i)\in\bigcap_{i\in I}\spec{\PP_i}=\spec{\prod_{i\in I}^{\operatorname{fin}}\PP_i}.\qedhere\]
\end{proof}

\section{Further questions and comments}

Corollary~\ref{trivial.den} and the absence of a similar result for the Knaster invariants suggest a natural  question about the behavior of the cardinals in the cellular spectrum. More precisely:

\begin{question}
Let $\PP$ be a poset. Does every cardinal $\kappa$ such that $\operatorname{pc}(\PP)< \kappa< d(\PP)$ belongs to $\spec{\PP}$?
\end{question}

Concerning the Knaster invariants, we still do not know if they are consistently different from each other. On the other hand, they all coincide under a standard assumption.

\begin{example} Assuming the existence of a Suslin Line $R$, one has $c(R)=\aleph_0$, while $\operatorname{pc}(R)>\aleph_0$ since $R$ is not productively ccc. On the other hand, the inequality $d(X)\leq c(X)^+$ holds for every LOTS $X$. Thus, we have $d(R)=\aleph_1$, from which it follows that all the Knaster invariants of $R$ collapse to $\aleph_1$.
\end{example}

It may also be interesting to explore the connections of the cellular spectrum with Martin's Axiom related topics. As we mentioned earlier, the standard strategy to show that MA+$\neg$CH implies that \emph{every ccc poset is productively ccc} starts by showing that \emph{every ccc poset has the $K_\omega$-property}. Note that we can restate both assertions, respectively, as the following implications:
\begin{align}
\label{PCCC}\forall \PP\quad c(\PP)=\aleph_0&\Rightarrow \operatorname{pc}(\PP)=\aleph_0;\\
\label{MA1}\forall \PP \quad c(\PP)=\aleph_0&\Rightarrow \mathscr{K}_\omega(\PP)=\aleph_0.\end{align}

Now, since $\operatorname{pc}(\PP)\leq \mathscr{K}_\omega(\PP)$, it follows immediately that \eqref{MA1}$\,\Rightarrow\,$\eqref{PCCC}. Although it is not completely well known, \eqref{MA1} is equivalent to MA$_{\aleph_1}$, thanks to the next proposition, due to Todor\v{c}evi\'{c} and Veli\v{c}kovi\'{c}~\cite{Stevo87}.

\begin{proposition}
[Todor\v{c}evi\'{c} and Veli\v{c}kovi\'{c}~\cite{Stevo87}, Theorem 3.4] MA$_{\aleph_1}$ holds if, and only if, every uncountable ccc poset has an uncountable centered subset.
\end{proposition}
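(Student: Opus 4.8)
The statement is an equivalence whose two directions are of quite different character. The implication ``MA$_{\aleph_1}\Rightarrow$ (every uncountable ccc poset has an uncountable centered subset)'' is the classical fact that MA$_{\aleph_1}$ upgrades the countable chain condition to precaliber $\aleph_1$; the converse is the real content, due to Todor\v{c}evi\'c and Veli\v{c}kovi\'c. I would prove the two directions separately.

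For the forward direction, let $\PP$ be ccc and fix distinct $a_\xi\in\PP$ for $\xi<\omega_1$. First I would carry out a \emph{preprocessing} step: using the countable chain condition and a transfinite recursion, find $r\in\PP$ and, after possibly replacing each $a_\xi$ by a common lower bound with $r$, an uncountable $A'=\{a'_\xi:\xi<\omega_1\}\subseteq\{p:p\le r\}$ such that for every $p\le r$ the set $\{\xi<\omega_1:p\nbot a'_\xi\}$ is uncountable. The point is that a condition $p\le r$ for which this set were bounded would let the recursion building an uncountable antichain continue, contradicting ccc. Granting this, the sets
\[D_\alpha:=\{p\le r:\exists\,\xi\ge\alpha\ \ p\le a'_\xi\}\qquad(\alpha<\omega_1)\]
are dense below $r$, so MA$_{\aleph_1}$ applied to the ccc poset $\PP\rest r$ produces a filter $G$ meeting every $D_\alpha$. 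Then $\{\xi:a'_\xi\in G\}$ is unbounded in $\omega_1$ while $\{a'_\xi:a'_\xi\in G\}\subseteq G$ is directed, hence an uncountable centered subset of $\PP$; undoing the refinements yields the subset asked for.

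For the converse I would argue contrapositively: assuming MA$_{\aleph_1}$ fails, fix a ccc poset $\PP$ and dense sets $\{D_\alpha:\alpha<\omega_1\}$ such that no filter on $\PP$ meets all of them, and build an uncountable ccc poset with no uncountable centered subset. The natural candidate is the poset $\mathbb{E}$ of finite generic approximations: conditions are pairs $(p,q)$ where $q$ is a finite partial function on $\omega_1$ with $q(\alpha)\in D_\alpha$ and $p\in\PP$ is a common lower bound of $\operatorname{ran}q$, ordered by coordinatewise refinement; the design guarantees that $(p,q)\nbot(p',q')$ in $\mathbb{E}$ exactly when $p\nbot p'$ in $\PP$ \emph{and} $q,q'$ agree on $\dom q\cap\dom q'$. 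If $\mathbb{E}$ had an uncountable centered subset $C$, the $q$-parts of the conditions in $C$ would cohere into a single function $H$; after checking that the domains occurring in $C$ are cofinal in $\omega_1$, $H$ would have domain $\omega_1$, and then $\{p:\exists q\,(p,q)\in C\}$ together with $\operatorname{ran}H$ would generate a filter on $\PP$ meeting every $D_\alpha$ — a contradiction. A reflection argument (an elementary submodel of size $\aleph_1$) first reduces to $|\PP|\le\aleph_1$ so that $\mathbb{E}$ is a set of size at most $\aleph_1$, and the degenerate case where $\mathbb{E}$ fails to be uncountable is disposed of by hand. So everything hinges on the verification that $\mathbb{E}$ is ccc.

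This last point is the main obstacle, and a genuine one rather than a routine check. The naive $\Delta$-system attack on a putative uncountable antichain of $\mathbb{E}$ thins to a root $s$, but over a poset $\PP$ of size $\aleph_1$ one cannot in general further thin the family so that the finite functions $q_i\rest s$ become constant, and two conditions whose $p$-parts are compatible but whose $q$-parts disagree on $s$ stay incompatible; the argument that $\mathbb{E}$ is ccc therefore cannot be pure ZFC and must feed some consequence of the failure of MA$_{\aleph_1}$ back in. This is exactly the place where Todor\v{c}evi\'c and Veli\v{c}kovi\'c replace the naive finite-approximation poset by a more delicate construction — an oscillation / walks-on-ordinals type argument manufacturing, from a given failure of MA$_{\aleph_1}$, a ccc poset on $\omega_1$ with no uncountable centered subset. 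I would expect essentially all of the real work to be concentrated there; the forward direction, the reflection to $|\PP|\le\aleph_1$, and the extraction of a filter from $C$ are comparatively routine.
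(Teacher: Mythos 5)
First, a point of context: the paper does not prove this proposition at all --- it is quoted verbatim from Todor\v{c}evi\'{c} and Veli\v{c}kovi\'{c} (Theorem 3.4 of \cite{Stevo87}) and used as a black box, so the relevant standard for your proposal is whether it actually supplies a proof, and for the hard direction it does not. Your forward direction (MA$_{\aleph_1}$ gives precaliber $\aleph_1$) is the standard argument and is essentially fine. The converse, however, has a genuine gap that you half-acknowledge but then underestimate. Your poset $\mathbb{E}$ of finite approximations is not merely ``hard to prove ccc'': it is provably not ccc in general. If some $D_\alpha$ is uncountable, the conditions $(d,\{(\alpha,d)\})$ for distinct $d\in D_\alpha$ are pairwise incompatible (their $q$-parts conflict at $\alpha$), giving an uncountable antichain outright; no $\Delta$-system refinement can repair this, because the obstruction is not a thinning issue but a structural one.

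Worse, even in the special cases where $\mathbb{E}$ happens to be ccc (e.g.\ all $D_\alpha$ countable), the extraction step fails: an uncountable centered subset $C\subseteq\mathbb{E}$ need not have $q$-domains cofinal in $\omega_1$ --- for instance $C=\{(p,\emptyset):p\in A\}$ for an uncountable centered $A\subseteq\PP$ is centered in $\mathbb{E}$ and decides nothing. The step you describe as ``checking that the domains occurring in $C$ are cofinal in $\omega_1$'' is not checkable; it is false. This is the conceptual heart of the matter: a generic filter meets every dense set, but an uncountable centered set need not meet any prescribed dense set, so the hypothesis ``every uncountable ccc poset has an uncountable centered subset'' cannot be cashed in on a naive approximation poset. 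The whole content of Todor\v{c}evi\'{c}--Veli\v{c}kovi\'{c} is the construction of a ccc poset (via their partition/oscillation machinery) engineered so that \emph{any} uncountable centered subset codes a filter meeting all the $D_\alpha$'s, and your proposal defers exactly this construction to the original authors without supplying it. So what you have is a correct proof of the easy implication plus a sketch of the easy bookkeeping around the hard one, with the key idea missing; as a replacement for the citation it does not stand on its own.
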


In~\cite{LT}, up to terminology, Larson and Todor\v{c}evi\'{c} asks whether any of the assumptions
\begin{enumerate}
\item $\forall \PP$ $c(\PP)=\aleph_0\Rightarrow \mathscr{K}_2(\PP)=\aleph_0$ or
\item $\forall \PP$ $c(\PP)=\aleph_0\Rightarrow \operatorname{pc}(\PP)=\aleph_0$
\end{enumerate}
imply MA$_{\aleph_1}$. Thus, after all we have done so far, Larson and Todor\v{c}evi\'{c}'s questions\footnote{And currently open.} suggest the following.

\begin{question}
Does MA$_{\aleph_1}$ implies $c(\PP)=\operatorname{pc}(\PP)$ for every poset $\PP$? Does the converse hold?
\end{question}
\begin{question}
Does MA$_{\aleph_1}$ implies $c(\PP)=\mathscr{K}_2(\PP)$ for every poset $\PP$? Does the converse hold?
\end{question}

\bibliography{ccc}{}

\begin{thebibliography}{1}

\bibitem{arhantight}
A.~Arhangel'skii.
\newblock The frequency spectrum of a topological space and the product
  operation.
\newblock {\em Trans. Moscow Math. Soc.}, 2:163--200, 1981.

\bibitem{Aur-Zdom}
L.~F. Aurichi and L.~Zdomskyy.
\newblock Internal characterizations of productively \text{L}indel\"of spaces.
\newblock {\em Proceedings of the American Mathematical Society}, 2018.

\bibitem{Fremlinbook}
D.~Fremlin.
\newblock {\em Consequences of {M}artin's axiom}.
\newblock Cambridge tracts in mathematics 84. Cambridge University Press, 1984.

\bibitem{Kunen}
K.~Kunen.
\newblock {\em Set Theory}.
\newblock College Publications, London, 2011.

\bibitem{LT}
P.~Larson and S.~Todor\v{c}evi\'{c}.
\newblock Chain conditions in maximal models.
\newblock {\em Fundamenta Mathematicae}, 168(1):77--104, 2001.

\bibitem{thesis}
R.~M. Mezabarba.
\newblock {\em Selection principles in hyperspaces}.
\newblock PhD thesis, ICMC-USP, 2018.

\bibitem{stevo.chaintopology}
S.~Todor\v{c}evi\'{c}.
\newblock Chain-conditions methods in topology.
\newblock {\em Topology and its Applications}, 101:45--82, 2000.

\bibitem{Stevo87}
S.~Todor\v{c}evi\'{c} and B.~Veli\v{c}kovi\'c.
\newblock Martin's axiom and partitions.
\newblock {\em Compositio Mathematica}, 63(3):391--408, 1987.

\end{thebibliography}
\bibliographystyle{abbrv}

\end{document}